\documentclass[11pt]{amsart}

\usepackage{amsmath,amssymb,amsthm}
\usepackage[margin=1.25in]{geometry}

\usepackage{array}
\usepackage{booktabs}
\usepackage{makecell}
\usepackage[table]{xcolor}
\usepackage{url}

\newtheorem{theorem}{Theorem}
\newtheorem{proposition}[theorem]{Proposition}
\newtheorem{lemma}[theorem]{Lemma}
\newtheorem{example}[theorem]{Example}

\theoremstyle{remark}
\newtheorem*{remark*}{Remark}

\renewcommand{\arraystretch}{1.08}

\newcommand{\Q}{\mathbb{Q}}
\newcommand{\R}{\mathbb{R}}
\newcommand{\Z}{\mathbb{Z}}
\newcommand{\sfpart}{\operatorname{sf}}

\title{Squared edge lengths of regular simplices with rational vertices}

\author[Scott Duke Kominers]{Scott Duke Kominers}
\address{Harvard Business School; Department of Economics and Center of
Mathematical Sciences and Applications, Harvard University; and a16z crypto}
\email{kominers@fas.harvard.edu}
\thanks{I used LLMs to assist with the invariant computations in the
preparation of this article, particularly GPT-5.4 Pro and Claude 4.6 Opus
(accessed in part via Poe with the support of Quora, where I am an advisor).
I particularly appreciate a thorough review from Refine.ink. The problem,
methods, and eventual written form are my own; and of course any errors remain
my responsibility. This work was conducted while I was visiting the
Technological Innovation, Entrepreneurship, and Strategic Management (TIES)
Group at the MIT Sloan School of Management; I greatly appreciate their
hospitality.}

\begin{document}

\begin{abstract}
We determine exactly which positive rational numbers occur as squared
edge lengths of regular \(d\)-simplices with vertices in \(\Q^n\).
The answer exhibits a sharp stabilization phenomenon: once \(n-d\ge 3\),
every positive rational number occurs, while codimensions \(0\), \(1\),
and \(2\) are governed by explicit square-class, norm-group, and
Hilbert-symbol conditions. The proof reduces simplex realizability to the 
Hasse--Minkowski classification of rational quadratic forms.
\end{abstract}

\maketitle
\markboth{REGULAR SIMPLICES WITH RATIONAL VERTICES}{REGULAR SIMPLICES WITH RATIONAL VERTICES}

\section{Introduction}

We determine exactly which positive rational numbers occur as squared edge
lengths of regular simplices with rational vertices.

For integers \(1\le d\le n\), define
\[
S_{\Q}(d,n)
:=
\left\{
m\in\Q_{>0} :
\exists \text{ a regular \(d\)-simplex in \(\Q^n\) of squared edge length } m
\right\}.
\]
Our main theorem gives a complete description of \(S_{\Q}(d,n)\) for every
pair \((d,n)\).

The answer exhibits a sharp stabilization phenomenon. Writing \(c=n-d\) for
the codimension, we prove that
\[
c\ge 3
\quad\Longrightarrow\quad
S_{\Q}(d,n)=\Q_{>0}.
\]
Thus in codimension at least \(3\), there is no arithmetic obstruction: every
positive rational number occurs. All exceptional behavior is confined to
codimensions \(0\), \(1\), and \(2\), where the answer is governed by explicit
square-class, norm-group, and Hilbert-symbol conditions depending only on
\(d\bmod 4\) and on the squarefree part of \(d+1\).

The key observation is that the simplex-realizability problem is equivalent
to an embedding problem for quadratic forms: Let \(A_d\) denote the Gram
matrix of a regular \(d\)-simplex of squared edge length \(2\). Writing
\(m=2a\), the existence of a regular simplex of squared edge length \(m\)
in \(\Q^n\) is equivalent to the existence of a positive definite rational
quadratic form \(r\) of rank \(n-d\) such that
\[
aA_d \perp r \simeq n\cdot\langle 1\rangle.
\]
Using the identity
\[
A_d \perp \langle d+1\rangle \simeq (d+1)\cdot\langle 1\rangle,
\]
the problem reduces to determining whether there exists a positive definite
form of prescribed rank, determinant class, and local Hasse invariants.

The quadratic-form perspective explains the codimension threshold. In rank
at least \(3\), the complement has enough local flexibility to realize the
required invariants. In rank \(2\), the problem becomes a prescribed
Hilbert-symbol problem; in rank \(1\), it becomes a norm-group condition; and
in rank \(0\), one must have \(aA_d\simeq d\cdot\langle 1\rangle\) itself.
The four possibilities just described correspond exactly to the four rows of
Theorem~\ref{thm:main}.

The problem is closely related to earlier work on rational and integral
simplices. Schoenberg~\cite{Schoenberg1937} classified the dimensions in which
a regular \(d\)-simplex can be realized in the standard lattice \(\Z^d\), and
Pelling~\cite{Pelling1977} gave the corresponding rational-coordinate
formulation. Equivalently, these works determine when the codimension-\(0\) row
in our classification is nonempty; Theorem~\ref{thm:main} refines this by
identifying the precise square classes of the squared edge lengths.
Beeson~\cite{Beeson1992} characterized all triangles embeddable in \(\Z^N\);
specialized to equilateral triangles and viewed up to square class, this
recovers the \(d=2\) cases of our classification. For \((d,n)=(3,3)\), our
classification recovers the square class \(2(\Q^\times)^2\), in agreement with
Ionascu's parametrization of regular tetrahedra in \(\Z^3\)~\cite{Ionascu2009};
related questions for the Platonic solids in \(\Z^3\) were studied by Ionascu
and Markov~\cite{IonascuMarkov2011}.

Several adjacent constructions also fit into the picture. Since multiplying
all coordinates by a common denominator converts a rational realization into an
integral one while multiplying the squared edge length by a rational square,
the rational problem is naturally a square-class refinement of the
corresponding lattice-realizability problem. When \(d=n\), regular
\(d\)-simplices whose vertices are vertices of a cube are equivalent to
Hadamard matrices of order \(d+1\); in particular, the Sylvester construction
gives examples when \(d+1\) is a power of \(2\). This connection appears
already in the classical literature and in later work of Medyanik, Grigoriev,
and Markov~\cite{Schoenberg1937,Pelling1977,Medyanik1973,Grigoriev1980,
Markov2011}. A different but related literature concerns Heronian triangles
and tetrahedra, where one prescribes integral edge lengths together with
rational or integral content and asks for lattice embeddings
\cite{Yiu2001,Lunnon2012,MarshallPerlis2013}. Our approach is also inspired by
the recent work of Bernert and Reinhold~\cite{BernertReinhold2026} on the
analogous problem for hypercubes; see also \cite{BidermanElkies2014}.

The full squared-edge-length classification presented here does not seem to
have appeared previously. In particular, beyond the classical
codimension-\(0\) existence results, the codimension-\(1\) families for
\(d\equiv 0,2\pmod 4\) and the codimension-\(2\) families governed by the local
sets \(\mathcal H_s\) and \(\mathcal U_s\) do not seem to have been isolated
previously.

The next section states the classification precisely. The following section
proves the result via case-by-case analysis of positive definite complements
with prescribed determinant class and local Hasse invariants. We then give
illustrative examples and conclude with structural remarks.

\section{Main Theorem}

Let \(1\le d\le n\). We first fix the notation used in the classification.

A regular \(d\)-simplex with vertices \(v_0,\dots,v_d\in\Q^n\) and squared edge
length \(m\) has Gram matrix
\[
\bigl((v_i-v_0)\cdot(v_j-v_0)\bigr)_{1\le i,j\le d}
=
\frac{m}{2}A_d,
\]
where
\[
A_d=
\begin{pmatrix}
2&1&\cdots&1\\
1&2&\cdots&1\\
\vdots&\vdots&\ddots&\vdots\\
1&1&\cdots&2
\end{pmatrix}.
\]
Thus, writing \(m=2a\), the problem is to decide when the form \(aA_d\)
embeds into the Euclidean form \(n\cdot\langle 1\rangle\).

Write \(s=\sfpart(d+1)\) for the squarefree part of \(d+1\). For a place \(v\)
of \(\Q\), let \((\ ,\ )_v\) denote the Hilbert symbol on \(\Q_v^\times\). For
squarefree \(t\in\Z\), define
\[
N_t^+=\{x^2-ty^2\in\Q_{>0}:x,y\in\Q\}.
\]
For \(t\neq 1\), \(N_t^+\) is the set of positive norms from the quadratic
field \(\Q(\sqrt t)\); for \(t=1\), we have \(N_1^+=\Q_{>0}\). In particular,
\(N_{-1}^+\) is the set of positive rationals that are sums of two rational
squares.

For squarefree \(s>0\), define
\begin{gather*}
\mathcal H_s
=
\{a\in\Q_{>0}:(a,s)_v=1 \text{ for every place \(v\) with \(-s\in(\Q_v^\times)^2\)}\},\\
\mathcal U_s
=
\{a\in\Q_{>0}:(a,-1)_v=1 \text{ for every place \(v\) with \(-as\in(\Q_v^\times)^2\)}\};
\end{gather*}
these conditions depend only on the square class of \(a\). Since \(a>0\) and \(s>0\),
neither \(-s\) nor \(-as\) is a square in \(\R^\times\), so the real place does
not contribute to the defining conditions for \(\mathcal H_s\) and
\(\mathcal U_s\).

For \(X\subseteq\Q_{>0}\), we write \(2X=\{2x:x\in X\}\).

\begin{theorem}\label{thm:main}
Consider \(d\) with \(1\le d\le n\), and put \(s=\sfpart(d+1)\). Then
\(S_{\Q}(d,n)\) depends only on the codimension \(n-d\), the residue class of
\(d\bmod 4\), and \(s\). The only obstructions occur in codimensions \(0\),
\(1\), and \(2\); for \(n-d\ge 3\) we have \(S_{\Q}(d,n)=\Q_{>0}\). More
precisely, we have:

\vspace{6pt}
\begin{center}
\renewcommand{\arraystretch}{1.4}%
\setlength{\tabcolsep}{7pt}%
\begin{tabular}{@{}lcccc@{}}
\toprule
& \(d\equiv 0\pmod 4\)
& \(d\equiv 1\pmod 4\)
& \(d\equiv 2\pmod 4\)
& \(d\equiv 3\pmod 4\) \\
\midrule
\(n-d=0\)
& \makecell[c]{\(\Q_{>0},\ s=1\) \\[0.35ex] \(\varnothing,\ s>1\)}
& \makecell[c]{\(2s(\Q^\times)^2,\ s\in N_{-1}^+\) \\[0.35ex]
                \(\varnothing,\ s\notin N_{-1}^+\)}
& \(\varnothing\)
& \(2s(\Q^\times)^2\) \\[1.1ex]
\(n-d=1\)
& \(2N_s^+\)
& \(2N_{-1}^+\)
& \(2N_{-s}^+\)
& \(\Q_{>0}\) \\
\(n-d=2\)
& \(2\mathcal H_s\)
& \(2\mathcal U_s\)
& \(\Q_{>0}\)
& \(\Q_{>0}\) \\
\(n-d\ge 3\)
& \(\Q_{>0}\)
& \(\Q_{>0}\)
& \(\Q_{>0}\)
& \(\Q_{>0}\). \\
\bottomrule
\end{tabular}
\end{center}
\vspace{4pt}
\end{theorem}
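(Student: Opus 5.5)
We reduce realizability to an orthogonal-complement problem for rational quadratic forms and then apply Hasse--Minkowski case by case in the rank \(c=n-d\) of the complement.

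\textbf{Step 1: reduction.} Translating \(v_0\) to the origin, a regular \(d\)-simplex of squared edge length \(m=2a\) in \(\Q^n\) amounts to \(d\) rational vectors spanning a subspace \(W\subseteq\Q^n\) on which the standard form restricts to \(aA_d\). The standard form is anisotropic over \(\Q\), so \(W\) is nondegenerate and \(\Q^n=W\perp W^\perp\), giving \(aA_d\perp r\simeq n\langle1\rangle\) with \(r\) positive definite of rank \(c\). Conversely, given such an isometry, Witt's theorem lets us realize it inside \(\Q^n\). Hence \(2a\in S_\Q(d,n)\) if and only if there is a positive definite \(r\) of rank \(c\) with \(aA_d\perp r\simeq n\langle 1\rangle\).

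\textbf{Step 2: invariants of \(aA_d\).} Use \(A_d\perp\langle d+1\rangle\simeq(d+1)\langle1\rangle\), which can be checked by diagonalizing \(A_d\) as \(\langle 2,3/2,\dots,(d+1)/d\rangle\). This gives \(\det(aA_d)=a^d(d+1)\). It also gives the local Hasse invariants of \(aA_d\) at every place \(v\), computed via the standard scaling formula \(\epsilon(aq)=\epsilon(q)\,(a,a)_v^{d(d-1)/2}(a,\det q)_v^{d-1}\). These expressions depend only on \(d\bmod 4\), on the square class of \(a\), and on \(s\). Comparing determinants and Hasse invariants of \(aA_d\perp r\) with those of \(n\langle1\rangle\) then prescribes the invariants of \(r\):
\begin{itemize}
\item its determinant class \(\delta=a^d(d+1)=a^d s\) modulo squares;
\item a Hasse invariant \(\eta_v\) at each place \(v\), with \(\prod_v\eta_v=1\) and \(\eta_\infty=1\).
\end{itemize}
By Hasse--Minkowski, existence of \(r\) is equivalent to existence of a positive definite form of rank \(c\) with determinant \(\delta\) and Hasse invariants \(\eta_v\).

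\textbf{Step 3: case analysis on \(c\).}

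For \(c\ge3\), a form with given rank \(\ge3\), determinant and Hasse invariants (with finitely many \(\eta_v=-1\), even product, and the correct real signature) always exists. This is the standard existence theorem, and it gives \(\Q_{>0}\).

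For \(c=0\) the condition is \(aA_d\simeq d\langle1\rangle\). Parity of \(d\) gives the determinant condition: \(a^ds\) must be a square. For even \(d\) this forces \(s=1\), and for odd \(d\) it forces \(a\in s(\Q^\times)^2\). The Hasse conditions then reduce to:
\begin{itemize}
\item \(d\equiv2\pmod 4\): a contradiction, giving \(\varnothing\);
\item \(d\equiv0\pmod 4\): automatic;
\item \(d\equiv1\pmod 4\): \((s,-1)_v=1\) at all \(v\), i.e. \(s\in N_{-1}^+\);
\item \(d\equiv3\pmod 4\): automatic.
\end{itemize}

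For \(c=1\), \(r=\langle\delta\rangle\) is forced, and one Hasse condition remains. It has the form \((\alpha,\beta)_v=1\) for all \(v\), which by Hasse's norm theorem says \(\alpha\in N_{\beta}^+\). Matching \(\alpha,\beta\) to each residue class gives the rows \(N_s^+\), \(N_{-1}^+\), \(N_{-s}^+\), and no condition when \(d\equiv 3\pmod 4\).

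For \(c=2\), write \(r\simeq\langle x,x\delta\rangle\). The question becomes whether there is \(x>0\) with \((x,-\delta)_v\) equal to a prescribed sign \(\eta'_v\) at every \(v\). By the Hilbert reciprocity/existence theorem, such \(x\) exists if and only if \(\prod_v\eta'_v=1\) and \(\eta'_v=1\) wherever \(-\delta\in(\Q_v^\times)^2\). For \(d\) even \(-\delta\sim -s\) and \(\eta'_v=(a,s)_v\), which gives \(\mathcal H_s\). For \(d\equiv1\pmod 4\) we get \(-\delta\sim -as\) and \(\eta'_v=(a,-1)_v\), which gives \(\mathcal U_s\). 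For \(d\equiv3\pmod 4\) the prescribed symbols should be trivial wherever \(-\delta\) is a local square, giving \(\Q_{>0}\). The case \(d\equiv 2\pmod 4\) also yields \(\Q_{>0}\); there I expect \((a,s)_v=1\) to hold automatically when \(-s\) is a local square, because \((a,s)_v=(a,-1)_v\) there, and that needs checking.

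\textbf{Main obstacle.} The main risk is bookkeeping in Steps 2 and 3: computing the Hasse invariant of \(aA_d\) exactly modulo 4, and verifying that the resulting local sign conditions collapse to exactly the table entries, especially the \(c=2\) column for \(d\equiv2,3\pmod 4\). I would first do the computation with the diagonalization \(\langle 2,3/2,\dots,(d+1)/d\rangle\), whose Hasse invariant telescopes. I would then sanity-check the outcome against small cases such as \((d,n)=(2,2)\), \((3,3)\) and \((1,2)\).
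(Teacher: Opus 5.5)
Your architecture is the paper's: reduce to a positive definite complement $r$, use $A_d\perp\langle d+1\rangle\simeq(d+1)\cdot\langle 1\rangle$ to prescribe $\det r$ and $\varepsilon_v(r)$, then apply Hasse--Minkowski rank by rank. The gap is that you never compute the local targets, and the one place you commit to one, it is wrong. Carrying out your Step~2 gives the target. From the identity you get $\varepsilon_v(A_d)=(s,-1)_v$; combining this with your scaling formula and $\varepsilon_v(aA_d\perp r)=1$ yields
\[
\varepsilon_v(r)=\eta_v=(a,-1)_v^{d(d+1)/2}\,(a,s)_v^{d+1}.
\]
This depends on $d\bmod 4$, not only on the parity of $d$. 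It equals $(a,s)_v$ for $d\equiv 0$, but $(a,-1)_v(a,s)_v=(a,-s)_v$ for $d\equiv 2\pmod 4$, since $d(d+1)/2$ is odd there. Your codimension-$2$ paragraph assigns $\eta'_v=(a,s)_v$ to every even $d$. That would give $2\mathcal H_s$, not $\Q_{>0}$, for $d\equiv 2$.

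The rescue you propose is false. When $-s$ is a local square you do have $(a,s)_v=(a,-1)_v$, but that symbol need not be $1$. Take $d=2$, $s=3$, $a=7$: $-3$ is a square in $\Q_7$, yet $(7,3)_7=-1$ because $3$ is not a square modulo $7$. So your condition would exclude $m=14$ from $S_{\Q}(2,4)$. But $0$, $(3,2,1,0)$, $(1,2,0,3)$ form an equilateral triangle of squared side $14$. With the correct target $(a,-s)_v$ the condition is vacuous, since $(a,-s)_v=1$ wherever $-s$ is a local square; indeed $x=a$ solves $(x,-s)_v=\eta_v$ at every place.

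The same missing computation shows elsewhere. Your codimension-$1$ entry $N_{-s}^+$ for $d\equiv 2$ is correct, but it contradicts the $\eta'_v=(a,s)_v$ you use in the codimension-$2$ case. So the entries are being read off the table rather than derived.

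In codimension $0$ with $d\equiv 2\pmod 4$, you list the contradiction under the Hasse conditions, but it comes from the determinant. If $s$ were $1$, the Hasse condition would only be $(a,-1)_v=1$ for all $v$, which $a=1$ satisfies. What you need is that $s=1$ is impossible, because $d+1\equiv 3\pmod 4$ is not a perfect square.

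The repair is to write down the bookkeeping $d\bmod 4\mapsto(\det r,\ \varepsilon_v(r))$ explicitly:
\[
0\mapsto(s,(a,s)_v),\quad 1\mapsto(as,(a,-1)_v),\quad 2\mapsto(s,(a,-s)_v),\quad 3\mapsto(as,1).
\]
With this, every cell of the theorem follows from your Step~3 arguments exactly as in the paper.
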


Thus the rational theory stabilizes sharply in codimension \(3\). The first
three rows in Theorem~\ref{thm:main} record the only cases in which the
complement has rank too small to absorb all local invariants automatically.

\begin{remark*}
Let \(S(d,n)\) denote the set of squared edge lengths of regular
\(d\)-simplices with vertices in \(\Z^n\). Then
\[
m\in S_{\Q}(d,n)
\iff
mq^2\in S(d,n)\text{ for some \(q\in\Q^\times\)}.
\]
Equivalently, \(S(d,n)\) and \(S_{\Q}(d,n)\) have the same image in
\(\Q_{>0}/(\Q^\times)^2\). Thus Theorem~\ref{thm:main} determines the image of
\(S(d,n)\) in \(\Q_{>0}/(\Q^\times)^2\); the exact integral classification,
however, requires additional arithmetic within each square class, which we do
not pursue here.
\end{remark*}

\begin{remark*}
The codimension-\(2\) conditions in Theorem~\ref{thm:main} are effectively
finite once \(a\) is fixed. For an odd prime \(p\), the Hilbert symbol of two
\(p\)-adic units is \(1\). Hence \((a,s)_p=1\) whenever \(v_p(a)=0\) and
\(p\nmid s\), and \((a,-1)_p=1\) whenever \(v_p(a)=0\). Moreover, if \(p\mid s\)
and \(v_p(a)=0\), then \(-as\) has odd \(p\)-adic valuation, so it is not a
square in \(\Q_p^\times\). Hence:
\begin{itemize}
\item \(a\in\mathcal H_s\) if and only if \((a,s)_p=1\) for each prime \(p\) in
the finite set
\[
\{2\}\cup\{p:\, v_p(a)\neq 0\}\cup\{p:\, p\mid s\}
\]
for which \(-s\in(\Q_p^\times)^2\);
\item \(a\in\mathcal U_s\) if and only if \((a,-1)_p=1\) for each prime \(p\) in
the finite set
\[
\{2\}\cup\{p:\, v_p(a)\neq 0\}
\]
for which \(-as\in(\Q_p^\times)^2\).
\end{itemize}
The real place never contributes because \(a,s>0\). Thus, although
\(\mathcal H_s\) and \(\mathcal U_s\) are defined over all places, for each
fixed \(a\), membership reduces to finitely many local checks.
\end{remark*}

\section{Proof of Theorem~\ref{thm:main}}\label{sec:proof}

\subsection{Sketch of argument}

Writing \(m=2a\) and \(c=n-d\), the embedding problem is equivalent to finding
a positive definite rational form \(r\) of rank \(c\) such that
\[
aA_d\perp r\simeq n\cdot\langle 1\rangle.
\]
The identity
\[
A_d\perp\langle d+1\rangle\simeq(d+1)\cdot\langle1\rangle
\]
then determines the required determinant class and local Hasse invariants of
\(r\). The proof proceeds by analyzing whether such an \(r\) exists in ranks
\(c\ge 3\), \(c=2\), \(c=1\), and \(c=0\).

\subsection{Preliminaries}

We begin with the standard invariants of rational quadratic forms. For a
diagonal form \(q=\langle a_1,\dots,a_r\rangle\) and a place \(v\) of \(\Q\),
let
\[
\varepsilon_v(q)=\prod_{1\le i<j\le r}(a_i,a_j)_v
\]
denote the local Hasse invariant. For a general form, we compute after
diagonalization; the result is independent of the chosen diagonalization. We
also use the identity
\[
\varepsilon_v(q_1\perp q_2)
=
\varepsilon_v(q_1)\varepsilon_v(q_2)
(\det q_1,\det q_2)_v
\]
for diagonal forms. Quadratic forms over \(\Q\) are classified by their
localizations at all places; equivalently, by dimension, determinant class, the
signature at the real place, and the local Hasse invariants, subject to the
usual product formula; see, e.g., \cite[Ch.~2]{Pfister1995} or
\cite[Ch.~VI]{Lam2005}.

\subsection{Reduction}\label{sec:reduction}

Put \(a=m/2\) and \(c=n-d\).

\begin{proposition}\label{prop:reduction}
We have \(m\in S_{\Q}(d,n)\) if and only if there exists a rational quadratic
form \(r\) of rank \(c\)---positive definite if \(c>0\) and the unique
\(0\)-dimensional form if \(c=0\)---such that
\begin{equation}
aA_d\perp r\simeq n\cdot\langle 1\rangle.
\label{eq:reduction}
\end{equation}
\end{proposition}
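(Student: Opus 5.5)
We prove the two implications separately. Throughout we use one standard fact: a quadratic space \(V\) over a field of characteristic \(\ne 2\) is an orthogonal sum \(U\perp U^\perp\) whenever the restriction of the form to the subspace \(U\) is nondegenerate. We also use Witt cancellation over \(\Q\).

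\emph{Forward direction.} Suppose \(v_0,\dots,v_d\in\Q^n\) span a regular simplex of squared edge length \(m\). Set \(w_i=v_i-v_0\) for \(1\le i\le d\). Then \(w_i\cdot w_i=m\) and \(w_i\cdot w_j=m/2\) for \(i\ne j\), because
\[
|w_i-w_j|^2=m=2m-2\,w_i\cdot w_j .
\]
So the Gram matrix of the \(w_i\) is \(aA_d\). The matrix \(A_d\) is positive definite, since \(A_d=I+J\) has eigenvalues \(1\) and \(d+1\). Hence \(aA_d\) is nonsingular, and the \(w_i\) are linearly independent over \(\Q\).

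Let \(U\subseteq\Q^n\) be the span of the \(w_i\). The restriction of the dot product to \(U\) is nondegenerate and is isometric to \(aA_d\). Therefore \(\Q^n=U\perp U^\perp\), where \(U^\perp\) is the orthogonal complement taken over \(\Q\). Let \(r\) be the restriction of the standard form to \(U^\perp\). Then \(r\) has rank \(n-d=c\), and it is positive definite because it is a restriction of the positive definite form \(n\cdot\langle1\rangle\). When \(c=0\), \(r\) is the zero-dimensional form. This gives \eqref{eq:reduction}.

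\emph{Reverse direction.} Suppose \(\phi\colon (\Q^d,aA_d)\perp(\Q^c,r)\to(\Q^n,n\cdot\langle1\rangle)\) is an isometry. Let \(e_1,\dots,e_d\) be the standard basis of the first summand, and put \(v_0=0\) and \(v_i=\phi(e_i)\in\Q^n\). Then \(v_i\cdot v_j=(aA_d)_{ij}\). It follows that
\[
|v_i-v_0|^2=2a=m,\qquad |v_i-v_j|^2=2a+2a-2a=2a=m \quad (i\ne j,\ i,j\ge1).
\]
So \(v_0,\dots,v_d\) are the vertices of a regular \(d\)-simplex in \(\Q^n\) with squared edge length \(m\). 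They are affinely independent because \(aA_d\) is nonsingular.

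This reverse direction does not use positivity of \(r\). Positivity is in fact automatic by Sylvester's law of inertia, since both \(aA_d\) and \(n\cdot\langle1\rangle\) are positive definite. By Witt cancellation the form \(r\) is moreover unique up to isometry.

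There is no real obstacle here. The only point that needs care is that the orthogonal complement is taken over \(\Q\), so that \(r\) is a rational form. This is guaranteed by the nondegeneracy of \(aA_d\), which is exactly what gives the splitting \(\Q^n=U\perp U^\perp\) over \(\Q\) itself.
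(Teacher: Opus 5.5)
Your proposal is correct and follows essentially the same route as the paper. You translate so that \(v_0=0\), identify the Gram matrix as \(aA_d\), and take \(r\) to be the restriction of the dot product to the rational orthogonal complement \(U^\perp\); conversely, you read off the simplex from the images of a basis of the \(aA_d\)-summand. Your side remarks are correct but not needed for the proposition: the explicit polarization computation, the fact that positivity of \(r\) is forced by Sylvester's law of inertia, and the uniqueness of \(r\) via Witt cancellation.
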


\begin{proof}
Assume first that \(m\in S_{\Q}(d,n)\). After translation, we may suppose that
\(v_0=0\). Let \(U=\operatorname{span}_{\Q}(v_1,\dots,v_d)\subseteq \Q^n\). The
Gram matrix of \(v_1,\dots,v_d\) is \(aA_d\). Since \(A_d\) is positive
definite, \(aA_d\) is nonsingular---and hence the vectors \(v_1,\dots,v_d\) are
linearly independent. Hence \(\dim U=d\), so \(\dim U^\perp=n-d=c\). The
orthogonal complement \(U^\perp\) is defined by rational linear equations,
hence has a basis over \(\Q\); because the ambient form
\(n\cdot\langle 1\rangle\) is positive definite, its restriction to \(U^\perp\)
is again positive definite, or the unique \(0\)-dimensional form when \(c=0\).
Therefore
\[
aA_d\perp r\simeq n\cdot\langle 1\rangle
\]
for some such \(r\).

Conversely, assume that such an \(r\) exists, and choose an isometry
\[
aA_d\perp r \xrightarrow{\sim} n\cdot\langle 1\rangle.
\]
Let \(w_1,\dots,w_d\in\Q^n\) be the images of a basis of the \(aA_d\)-summand
with Gram matrix \(aA_d\). Then
\[
|w_i|^2=w_i\cdot w_i=2a=m
\]
for each \(i\), and for \(i\ne j\),
\[
|w_i-w_j|^2
=
w_i\cdot w_i+w_j\cdot w_j-2w_i\cdot w_j
=
2a+2a-2a
=
m.
\]
Thus
\[
0,w_1,\dots,w_d\in\Q^n
\]
form a regular \(d\)-simplex of squared edge length \(m\). Since the Gram
matrix \(aA_d\) is nonsingular, the vectors \(w_1,\dots,w_d\) are linearly
independent, so \(0,w_1,\dots,w_d\) are affinely independent.
\end{proof}

We next record the elementary identity
\begin{equation}\label{eq:Ad-basic}
A_d\perp \langle d+1\rangle \simeq (d+1)\cdot \langle 1\rangle.
\end{equation}
Indeed, in \((d+1)\cdot\langle 1\rangle\), the vectors
\[
u_i=e_i-e_{d+1}\qquad (1\le i\le d)
\]
have Gram matrix \(A_d\), while
\[
w=e_1+\cdots+e_{d+1}
\]
is orthogonal to all \(u_i\) and has norm \(d+1\). Moreover,
\[
e_{d+1}
=
\frac{1}{d+1}\left(w-\sum_{i=1}^d u_i\right),
\qquad
e_i=u_i+e_{d+1}\quad(1\le i\le d),
\]
so \(u_1,\dots,u_d,w\) form a basis of \(\Q^{d+1}\), proving
\eqref{eq:Ad-basic}. Also, \(A_d=I_d+J_d\), where \(J_d\) is the all-\(1\)'s
matrix; hence the eigenvalues of \(A_d\) are \(1\) with multiplicity \(d-1\)
and \(d+1\) with multiplicity \(1\), so
\begin{equation}\label{eq:det-Ad}
\det(A_d)=d+1.
\end{equation}

Write \(d+1=su^2\) with \(u\in\Z_{>0}\). Since
\[
\langle a(d+1)\rangle \simeq \langle as\rangle,
\]
scaling \eqref{eq:Ad-basic} gives
\[
aA_d\perp \langle as\rangle \simeq (d+1)\cdot\langle a\rangle.
\]
Accordingly, the condition \eqref{eq:reduction} in
Proposition~\ref{prop:reduction} is equivalent to the existence of a positive
definite rational form \(r\) of rank \(c\), or the unique \(0\)-dimensional form
if \(c=0\), such that
\begin{equation}\label{eq:cancelled}
(d+1)\cdot\langle a\rangle \perp r
\simeq
(d+c)\cdot\langle 1\rangle \perp \langle as\rangle.
\end{equation}
Indeed, we obtain \eqref{eq:cancelled} by adjoining \(\langle as\rangle\) to
the isometry in Proposition~\ref{prop:reduction}; conversely, canceling
\(\langle as\rangle\) (via Witt cancellation for nondegenerate quadratic forms
over \(\Q\)) recovers the proposition.

Taking determinants in Proposition~\ref{prop:reduction} and using
\eqref{eq:det-Ad}, we obtain
\begin{equation}\label{eq:delta}
\det(r)\equiv a^d s \pmod{(\Q^\times)^2}.
\end{equation}
(Here, we also use the fact that every element of
\(\Q^\times/(\Q^\times)^2\) is its own inverse.)

Assume now that \(c\ge 1\). Comparing Hasse invariants in
\eqref{eq:cancelled} yields
\begin{equation}\label{eq:r-hasse}
\varepsilon_v(r)
=
(a,-1)_v^{d(d+1)/2}(a,s)_v^{d+1}
\end{equation}
for every place \(v\).

Indeed,
\[
\varepsilon_v\bigl((d+1)\cdot\langle a\rangle\bigr)
=
(a,a)_v^{\binom{d+1}{2}}
=
(a,-1)_v^{d(d+1)/2},
\]
while
\[
\varepsilon_v\bigl((d+c)\cdot\langle 1\rangle\perp \langle as\rangle\bigr)
=
1.
\]
Moreover,
\[
(\det((d+1)\cdot\langle a\rangle),\det r)_v
=
(a^{d+1},a^d s)_v
=
(a,a)_v^{d(d+1)}(a,s)_v^{d+1}
=
(a,s)_v^{d+1},
\]
since \((a,a)_v=(a,-1)_v\) and \(d(d+1)\) is even. Formula
\eqref{eq:r-hasse} follows.

Define
\begin{equation}
\eta_v=(a,-1)_v^{d(d+1)/2}(a,s)_v^{d+1}.
\label{eq:eta}
\end{equation}
Then \eqref{eq:r-hasse} is exactly the condition
\(\varepsilon_v(r)=\eta_v\) for all \(v\). The family \(\{\eta_v\}_v\) has
finite support, satisfies \(\eta_\infty=1\), and obeys
\(\prod_v\eta_v=1\) by Hilbert reciprocity.

Conversely, if \(c\ge 1\) and \(r\) is a positive definite rational quadratic
form of rank \(c\) satisfying \eqref{eq:delta} and \eqref{eq:r-hasse}, then the
two sides of \eqref{eq:cancelled} have the same dimension, determinant class,
local Hasse invariants, and positive signature at the real place; hence, they
are isometric over \(\Q\). Thus, for \(c\ge 1\), conditions \eqref{eq:delta}
and \eqref{eq:r-hasse} are both necessary and sufficient.

\subsection{Auxiliary local facts}

If \(r=\langle x,\delta/x\rangle\), then
\begin{equation}\label{eq:binary-hasse}
\varepsilon_v(r)=(x,-\delta)_v.
\end{equation}
Indeed,
\[
1=(x,1)_v=(x,xx^{-1})_v=(x,x)_v(x,x^{-1})_v,
\]
so \((x,x^{-1})_v=(x,x)_v\); hence,
\[
(x,\delta/x)_v
=
(x,\delta)_v(x,x^{-1})_v
=
(x,\delta)_v(x,x)_v
=
(x,\delta)_v(x,-1)_v
=
(x,-\delta)_v.
\]

Next, for squarefree \(t\neq 1\) and \(a\in\Q_{>0}\), the condition
\(a\in N_t^+\) is equivalent to \((a,t)_v=1\) for all places \(v\); this is the
Hasse norm theorem for the quadratic extension \(\Q(\sqrt t)/\Q\), with the
real condition already encoded by the positivity of \(a\). For \(t=1\), we have
\(N_1^+=\Q_{>0}\), since every \(r\in\Q_{>0}\) can be written as
\[
r
=
\left(\frac{r+1}{2}\right)^2
-
\left(\frac{r-1}{2}\right)^2.
\]

We shall also use the prescribed Hilbert-symbol theorem in the following form:
for fixed \(b\in\Q^\times\), a family of signs \(\{\iota_v\}_v\) with finite
support and \(\prod_v\iota_v=1\) is realized as \((x,b)_v=\iota_v\) by some
\(x\in\Q^\times\) if and only if \(\iota_v=1\) at every place where
\(b\in(\Q_v^\times)^2\); see \cite[Ch.~2]{Pfister1995} or
\cite[Ch.~VI]{Lam2005}.

Finally, over a nonarchimedean local field \(\Q_v\), every choice of dimension
\(c\ge 3\), determinant class, and Hasse invariant is realized by some
quadratic form; this follows from the local classification of quadratic spaces
\cite[Ch.~VI]{Lam2005}.

\subsection{A parity lemma}

\begin{lemma}\label{lem:parity}
For \(d\bmod 4\), the parities of the exponents appearing in the Hasse
computations are
\[
\renewcommand{\arraystretch}{1.05}
\setlength{\tabcolsep}{9pt}
\begin{array}{c|cc}
d\bmod 4 & (d+1)\bmod 2 & \frac{d(d+1)}{2}\bmod 2 \\
\hline
0 & 1 & 0 \\
1 & 0 & 1 \\
2 & 1 & 1 \\
3 & 0 & 0.
\end{array}
\]
\end{lemma}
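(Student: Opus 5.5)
The plan is a direct residue computation. Write \(d=4k+j\) with \(j\in\{0,1,2,3\}\) and \(k\in\Z_{\ge 0}\), and determine the two columns separately.

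For the first column, note that \(d+1=4k+j+1\equiv j+1\pmod 2\). This equals \(1\) when \(j\in\{0,2\}\) and \(0\) when \(j\in\{1,3\}\).

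For the second column, observe that \(d(d+1)/2\) depends modulo \(2\) only on \(d\bmod 4\), because \(d(d+1)\bmod 4\) does. Indeed,
\[
(d+4)(d+5)-d(d+1)=8d+20\equiv 0\pmod 4 .
\]
It therefore suffices to evaluate \(d(d+1)/2\) at the representatives \(d=0,1,2,3\). These give \(0,1,3,6\), which have parities \(0,1,1,0\), matching the table. For a closed form one could instead expand
\[
\frac{(4k+j)(4k+j+1)}{2}=8k^2+2k(2j+1)+\frac{j(j+1)}{2}\equiv\frac{j(j+1)}{2}\pmod 2 .
\]

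There is no real obstacle. The only point needing care is justifying that reducing \(d\) modulo \(4\), rather than modulo \(2\), is enough to fix the parity of the triangular number. The periodicity identity above, or equivalently the closed-form expansion, handles this.
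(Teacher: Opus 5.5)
Your proposal is correct and follows the same route as the paper, which simply writes \(d=4k+\ell\) with \(0\le \ell\le 3\) and reduces both expressions modulo \(2\). Your expansion \(\frac{(4k+j)(4k+j+1)}{2}=8k^2+2k(2j+1)+\frac{j(j+1)}{2}\) checks out and makes the reduction explicit.
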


\begin{proof}
The result is immediate upon writing \(d=4k+\ell\) with \(0\le \ell\le 3\) and
reducing the two expressions modulo \(2\).
\end{proof}

\begin{remark*}
For later reference, combining \eqref{eq:delta}, \eqref{eq:r-hasse}, and
Lemma~\ref{lem:parity} yields the following bookkeeping for the complement
\(r\) when \(c\ge 1\): for every place \(v\),
\[
\renewcommand{\arraystretch}{1.06}
\setlength{\tabcolsep}{8pt}
\begin{array}{c|cc}
d\bmod 4 & \det(r)\bmod(\Q^\times)^2 & \varepsilon_v(r) \\
\hline
0 & s  & (a,s)_v \\
1 & as & (a,-1)_v \\
2 & s  & (a,-s)_v \\
3 & as & 1.
\end{array}
\]
Thus the later casework is exactly the problem of realizing these determinant
classes and local Hasse targets by a positive definite form of rank \(c=n-d\).
\end{remark*}

\subsection{Case analysis by codimension}

\subsubsection*{Codimension \(\ge 3\)}

Let \(c\ge 3\), let \(\delta>0\) be a representative of the square class
\(a^d s\), and let \(\eta_v\) be as in \eqref{eq:eta}. The family
\(\{\eta_v\}_v\) has finite support, satisfies \(\eta_\infty=1\), and obeys
\(\prod_v\eta_v=1\).

For each finite place \(v\), the local existence theorem for quadratic spaces
over \(\Q_v\) gives a \(c\)-dimensional nondegenerate form with determinant
class \(\delta(\Q_v^\times)^2\) and Hasse invariant \(\eta_v\). At the real
place, take
\[
r_\infty=(c-1)\cdot\langle 1\rangle\perp \langle \delta\rangle,
\]
which is positive definite, has determinant class \(\delta(\R^\times)^2\), and
has Hasse invariant \(1=\eta_\infty\).

By the Hasse--Minkowski existence and classification theorem, these local
invariants determine a rational quadratic form \(r\) whose localization at each
place has the prescribed dimension, determinant class, and Hasse invariant.
Since \(r_\infty\) is positive definite, the real localization of \(r\) is
positive definite; hence \(r\) is positive definite as a rational form.
Therefore \(r\) satisfies \eqref{eq:delta} and \eqref{eq:r-hasse}, and so
\eqref{eq:cancelled} follows.

\subsubsection*{Codimension \(2\)}

Now let \(c=2\), and let \(\delta>0\) be a representative of the determinant
class \(a^d s\). Every positive definite binary form over \(\Q\) diagonalizes,
so any such form of determinant class \(\delta\) is \(\Q\)-isometric to
\(\langle u,v\rangle\) with \(u,v\in\Q_{>0}\) and
\(uv\in\delta(\Q^\times)^2\). Absorbing the square factor into one coefficient
shows that it is \(\Q\)-isometric to
\[
r=\langle x,\delta/x\rangle
\qquad (x\in\Q_{>0}).
\]
Then \eqref{eq:binary-hasse} and \eqref{eq:r-hasse} become
\begin{equation}\label{eq:c2-master}
(x,-\delta)_v=\eta_v
\qquad\text{for every place \(v\)}.
\end{equation}
By the prescribed Hilbert-symbol theorem, \eqref{eq:c2-master} is solvable for
some \(x\in\Q^\times\) if and only if
\[
\eta_v=1\qquad\text{at every place \(v\) with \(-\delta\in(\Q_v^\times)^2\)}.
\]
Since \(\eta_\infty=1\), the real equation forces \(x>0\). Indeed,
\(\delta>0\), so \(-\delta<0\), and over \(\R\) we have
\((x,-\delta)_\infty=1\) if and only if \(x>0\). Hence
\(r=\langle x,\delta/x\rangle\) is positive definite. Thus the
codimension-\(2\) case reduces to a single local condition.

If \(d\equiv 0\pmod 4\), then \(\delta=s\) and \(\eta_v=(a,s)_v\). Therefore
\eqref{eq:c2-master} is solvable if and only if
\[
(a,s)_v=1
\qquad\text{for every place \(v\) with \(-s\in(\Q_v^\times)^2\)},
\]
that is, if and only if \(a\in\mathcal H_s\).

If \(d\equiv 1\pmod 4\), then \(\delta=as\) and \(\eta_v=(a,-1)_v\). Therefore
\eqref{eq:c2-master} is solvable if and only if
\[
(a,-1)_v=1\qquad\text{for every place \(v\) with \(-as\in(\Q_v^\times)^2\)},
\]
that is, if and only if \(a\in\mathcal U_s\).

If \(d\equiv 2\pmod 4\), then \(\delta=s\) and \(\eta_v=(a,-s)_v\). The
condition is automatic, because \(\eta_v=1\) whenever \(-s\) is a local square;
explicitly, \(x=a\) satisfies \eqref{eq:c2-master}.

If \(d\equiv 3\pmod 4\), then \(\delta=as\) and \(\eta_v=1\). Again the
condition is automatic; explicitly, \(x=1\) satisfies \eqref{eq:c2-master}.

In each case we obtain a positive definite binary form \(r\) with the required
determinant class and local Hasse invariants, so \eqref{eq:cancelled} follows;
this establishes the row \(n-d=2\).

\subsubsection*{Codimension \(1\)}

Let \(c=1\). Choose a positive representative \(\delta\) of the square class
\(a^d s\), and take
\[
r=\langle \delta\rangle.
\]
Since \(\varepsilon_v(r)=1\), equation \eqref{eq:r-hasse} becomes
\[
1=\eta_v
\qquad\text{for every place \(v\)};
\]
by Lemma~\ref{lem:parity}, this is equivalent to
\[
\begin{cases}
(a,s)_v=1& d\equiv 0\pmod 4,\\
(a,-1)_v=1& d\equiv 1\pmod 4,\\
(a,-s)_v=1& d\equiv 2\pmod 4,\\
\text{no condition}& d\equiv 3\pmod 4;
\end{cases}
\]
corresponding to
\[
\begin{cases}
a\in N_s^+& d\equiv 0\pmod 4,\\
a\in N_{-1}^+& d\equiv 1\pmod 4,\\
a\in N_{-s}^+& d\equiv 2\pmod 4,\\
a\in \Q_{>0}& d\equiv 3\pmod 4.
\end{cases}
\]
Since \(r\) has rank \(1\) and is positive definite, the sufficiency statement
proved in Section~\ref{sec:reduction} applies: matching dimension, determinant
class, local Hasse invariants, and positive real signature gives the required
isometry~\eqref{eq:cancelled}; as \(m=2a\), this is exactly the row \(n-d=1\).

\subsubsection*{Codimension \(0\)}

Finally, let \(c=0\). Then \eqref{eq:cancelled} reads as
\[
(d+1)\cdot\langle a\rangle
\simeq
d\cdot\langle 1\rangle\perp \langle as\rangle.
\]
The determinant condition is
\[
a^d s\in (\Q^\times)^2,
\]
and the Hasse condition is
\[
(a,-1)_v^{d(d+1)/2}=1
\]
for every place \(v\). Since both sides are positive definite of rank \(d+1\),
these conditions are also sufficient. Lemma~\ref{lem:parity} yields four cases.

If \(d\equiv 0\pmod 4\), then \(a^d\) is a square, so the determinant condition
forces \(s=1\). The Hasse condition is automatic because \(d(d+1)/2\) is even.
Hence \(S_{\Q}(d,d)=\Q_{>0}\) if \(s=1\), and is empty otherwise.

If \(d\equiv 1\pmod 4\), then the determinant condition is
\(a\in s(\Q^\times)^2\). Writing \(a=sq^2\), the Hasse condition becomes
\((s,-1)_v=1\) for every \(v\), that is, \(s\in N_{-1}^+\). In that case
\(m=2a\in 2s(\Q^\times)^2\).

If \(d\equiv 2\pmod 4\), then \(d\) is even, so the determinant condition would
force \(s=1\). This is impossible because \(d+1\equiv 3\pmod 4\), whereas a
perfect square is congruent only to \(0\) or \(1\pmod 4\).

If \(d\equiv 3\pmod 4\), then the Hasse condition is automatic, and the
determinant condition is \(a\in s(\Q^\times)^2\), equivalently
\(m\in 2s(\Q^\times)^2\).

Thus we have confirmed the row \(n-d=0\), completing the proof of
Theorem~\ref{thm:main}. \qedsymbol

\section{Examples}

\begin{proposition}\label{prop:obstruction}
There is no regular \(4\)-simplex with rational vertices in \(\Q^4\).
\end{proposition}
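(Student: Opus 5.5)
This is the case \(d=n=4\) of Theorem~\ref{thm:main}, and the plan is to read it off the codimension-\(0\) row directly. Here \(d\equiv 0\pmod 4\) and \(s=\sfpart(5)=5>1\), so that row gives \(S_{\Q}(4,4)=\varnothing\), which is the claim. To make the example self-contained, I would also rederive the obstruction from the determinant alone, without the full Hasse machinery.

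\textbf{Step 1: determinant relation.} Suppose \(v_0,\dots,v_4\in\Q^4\) form a regular simplex with squared edge length \(m=2a\). Translate so that \(v_0=0\). The Gram matrix of \(v_1,\dots,v_4\) is then \(aA_4\). By Proposition~\ref{prop:reduction} with \(c=0\), or directly by writing this Gram matrix as \(V V^{T}\) with \(V\) the rational \(4\times 4\) matrix of coordinates, we get
\[
\det(aA_4)=\det(V)^2\in(\Q^\times)^2.
\]

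\textbf{Step 2: evaluate and conclude.} By \eqref{eq:det-Ad}, \(\det(aA_4)=a^4\cdot 5\). Since \(a^4\) is a rational square, this forces \(5\in(\Q^\times)^2\). That is false: \(\sqrt5\) is irrational, for instance because \(v_5(5)=1\) is odd. This contradiction proves the proposition.

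There is no real obstacle, since the Hasse conditions are not even needed here. The only point needing care is that the vertices are affinely independent, so \(V\) is a square matrix and the determinant identity in Step 1 is legitimate. Affine independence holds because \(A_4\) is positive definite, as noted in the proof of Proposition~\ref{prop:reduction}.
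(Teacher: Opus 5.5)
Your proposal is correct and takes the same route as the paper, which simply reads off the codimension-$0$ row of Theorem~\ref{thm:main} with $s=5>1$. Your elementary Steps~1--2 isolate the determinant condition $a^d s\in(\Q^\times)^2$ from the paper's codimension-$0$ analysis; there, affine independence is not actually needed, since $V$ is square simply because $d=n$, and $5=(\det V/a^2)^2$ follows directly.
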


\begin{proof}
Here \(d=n=4\) and \(s=\sfpart(5)=5\). The codimension-\(0\) row of
Theorem~\ref{thm:main} gives \(S_{\Q}(4,4)=\varnothing\), since \(s>1\);
hence, no such simplex exists.
\end{proof}

\begin{example}\label{ex:positive}
There exists a regular $4$-simplex of squared edge length $10$ in $\Q^5$.
(By Proposition~\ref{prop:obstruction}, no such simplex exists in $\Q^4$.)
\end{example}

\begin{proof}
For \((d,n)=(4,5)\) we have \(s=5\), and Theorem~\ref{thm:main} gives
\(S_{\Q}(4,5)=2N_5^+\). As
\[
5=5^2-5\cdot 2^2,
\]
we have \(5\in N_5^+\), hence \(10\in 2N_5^+\).

An explicit example is given by the row vectors of
\[
W=
\begin{pmatrix}
1&3&0&0&0\\
2&1&1&0&2\\
2&1&-2&0&1\\
\frac54&\frac54&-\frac14&\frac52&\frac34
\end{pmatrix}.
\]
A direct computation yields
\[
WW^{\!T}=5A_4;
\] 
indeed, each row has squared norm \(10\), and each pair of distinct rows has dot
product \(5\).

Thus, if \(v_0=(0,0,0,0,0)\) and \(v_1,\dots,v_4\in\Q^5\) are the rows of
\(W\), then the Gram matrix of \(v_1-v_0,\dots,v_4-v_0\) is \(5A_4\).
Equivalently, all pairwise squared distances among \(v_0,\dots,v_4\) are equal
to \(10\).
\end{proof}

\section{Remarks}

Theorem~\ref{thm:main} depends only on the codimension \(n-d\), the residue
class of \(d\bmod 4\), and the squarefree part \(s=\sfpart(d+1)\). The
parameter \(s\) appears directly in the codimension-\(0\) square classes,
through the norm groups \(N_{\pm s}^+\) in codimension \(1\), and through the
local sets \(\mathcal H_s\) and \(\mathcal U_s\) in codimension \(2\). For each
fixed \(d\), the stable row shows that every positive rational occurs as the
squared edge length of a regular \(d\)-simplex with vertices in
\(\Q^{d+3}\). This is sharper, in the special case of regular simplices,
than Maehara's general bound \cite[Theorem~2]{Maehara1995}; for \(d=2\), the
stronger stabilization at codimension \(2\) is consistent with Beeson's theorem
\cite{Beeson1992}.

Meanwhile, the following proposition explains the parameter dependence more
structurally, along the lines of \cite{BernertReinhold2026}. When
\(\sfpart(d+1)=\sfpart(d'+1)\), the forms \(A_d\) and \(A_{d'}\) differ only by
an orthogonal sum of copies of \(\langle 1\rangle\). If in addition
\(d\equiv d'\pmod 4\), then the corresponding scaled summand is isometric to
the same number of copies of \(\langle 1\rangle\), yielding stabilization of
\(S_{\Q}(d,n)\).

\begin{proposition}\label{prop:stabilization}
Let \(d,d'\ge 1\) satisfy \(\sfpart(d+1)=\sfpart(d'+1)\), and suppose that
\(d'\ge d\). Then
\[
A_{d'}\simeq A_d\perp (d'-d)\cdot\langle 1\rangle
\]
over \(\Q\).

If in addition \(d\equiv d'\pmod 4\), then
\[
S_{\Q}(d,n)=S_{\Q}(d',n+d'-d)
\]
for every \(n\ge d\).
\end{proposition}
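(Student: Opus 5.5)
For the first assertion we compare both sides through the identity \eqref{eq:Ad-basic}. Write \(k=d'-d\). By \eqref{eq:Ad-basic},
\[
A_d\perp\langle d+1\rangle\perp k\cdot\langle 1\rangle\simeq (d'+1)\cdot\langle 1\rangle\simeq A_{d'}\perp\langle d'+1\rangle .
\]
Since \(\sfpart(d+1)=\sfpart(d'+1)\), the ratio \((d'+1)/(d+1)\) is a rational square. Hence \(\langle d+1\rangle\simeq\langle d'+1\rangle\). Witt cancellation of this rank-one summand then gives \(A_{d'}\simeq A_d\perp k\cdot\langle 1\rangle\). This step is routine.

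For the second assertion we use Proposition~\ref{prop:reduction}. Fix \(n\ge d\), put \(n'=n+k\), and note that both problems have codimension \(c=n-d=n'-d'\). Scaling the isometry just proved by \(a\) gives
\[
aA_{d'}\simeq aA_d\perp k\cdot\langle a\rangle .
\]
Now use \(d\equiv d'\pmod 4\): then \(k\equiv 0\pmod 4\), so \(k\cdot\langle a\rangle\) is a sum of \(k/4\) copies of \(4\cdot\langle a\rangle\). The key local fact is \(4\cdot\langle a\rangle\simeq 4\cdot\langle 1\rangle\) over \(\Q\) for every \(a\in\Q_{>0}\). To see it, compare invariants. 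The dimension is \(4\), the determinant is \(a^4\), which is a square, and both forms are positive definite. The Hasse invariant is \((a,a)_v^{6}=1\), matching that of \(4\cdot\langle 1\rangle\). Hasse--Minkowski then gives the isometry. (Alternatively, one can use Lagrange's four-square theorem over \(\Q\) and the multiplicativity of the quaternion norm form.) Therefore \(aA_{d'}\simeq aA_d\perp k\cdot\langle 1\rangle\).

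Finally, the two realizability conditions match. If \(aA_d\perp r\simeq n\cdot\langle 1\rangle\) with \(r\) positive definite of rank \(c\), then adding \(k\cdot\langle 1\rangle\) to both sides gives \(aA_{d'}\perp r\simeq n'\cdot\langle 1\rangle\). Conversely, from \(aA_{d'}\perp r\simeq n'\cdot\langle1\rangle\) we get \(aA_d\perp k\cdot\langle 1\rangle\perp r\simeq n\cdot\langle 1\rangle\perp k\cdot\langle 1\rangle\), and Witt cancellation of \(k\cdot\langle 1\rangle\) yields \(aA_d\perp r\simeq n\cdot\langle 1\rangle\). The case \(c=0\) needs no separate treatment, since \(r\) is then the zero form. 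By Proposition~\ref{prop:reduction}, \(m=2a\in S_{\Q}(d,n)\) if and only if \(m\in S_{\Q}(d',n')\).

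The only point that needs care is the isometry \(4\cdot\langle a\rangle\simeq 4\cdot\langle 1\rangle\). It is exactly where the hypothesis \(d\equiv d'\pmod 4\) enters, and we verify it through the invariant check above.
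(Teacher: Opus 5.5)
Your proposal is correct and follows essentially the paper's own argument. You derive \(A_{d'}\simeq A_d\perp k\cdot\langle 1\rangle\) by Witt cancellation of the rank-one summand (\(\langle d+1\rangle\simeq\langle d'+1\rangle\), where the paper uses \(\langle s\rangle\)), and you prove \(k\cdot\langle a\rangle\simeq k\cdot\langle 1\rangle\) by a Hasse--Minkowski invariant check, done in blocks of four where the paper uses evenness of \(\binom{k}{2}\). You then get the two inclusions by adjoining \(k\cdot\langle 1\rangle\) in one direction and by Witt cancellation in the other, exactly as the paper does.
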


\begin{proof}
Write \(d+1=su^2\) and \(d'+1=s(u')^2\), where
\(s=\sfpart(d+1)=\sfpart(d'+1)\). By \eqref{eq:Ad-basic},
\[
A_d\perp \langle s\rangle
\simeq
A_d\perp \langle d+1\rangle
\simeq
(d+1)\cdot\langle 1\rangle,
\]
and similarly
\[
A_{d'}\perp \langle s\rangle
\simeq
(d'+1)\cdot\langle 1\rangle.
\]
Hence,
\[
A_{d'}\perp \langle s\rangle
\simeq
(d'+1)\cdot\langle 1\rangle
\simeq
(d'-d)\cdot\langle 1\rangle \perp (d+1)\cdot\langle 1\rangle
\simeq
A_d\perp (d'-d)\cdot\langle 1\rangle \perp \langle s\rangle.
\]
Witt cancellation then gives
\begin{equation}
A_{d'}\simeq A_d\perp (d'-d)\cdot\langle 1\rangle,
\label{eq:wittAd}
\end{equation}
as claimed.

Now set \(k=d'-d\). If \(d\equiv d'\pmod 4\), then \(k\equiv 0\pmod 4\). For
every \(a\in\Q_{>0}\), we claim that
\begin{equation}\label{eq:k-scaled-euclidean}
k\cdot\langle a\rangle \simeq k\cdot\langle 1\rangle.
\end{equation}
If \(k=0\), then \eqref{eq:k-scaled-euclidean} is immediate. If \(k>0\), then
the two forms have the same rank and are both positive definite over \(\R\).
Their determinant classes agree because \(a^k\in(\Q^\times)^2\). Moreover, for
every place \(v\),
\[
\varepsilon_v(k\cdot\langle a\rangle)
=
(a,a)_v^{\binom{k}{2}}
=
(a,-1)_v^{\binom{k}{2}}.
\]
Since \(k\equiv0\pmod4\), the integer \(\binom{k}{2}=k(k-1)/2\) is even; hence
\[
\varepsilon_v(k\cdot\langle a\rangle)
=
1
=
\varepsilon_v(k\cdot\langle 1\rangle)
\]
for every \(v\). The Hasse--Minkowski classification therefore gives
\eqref{eq:k-scaled-euclidean}.

Suppose now that \(m=2a\in S_{\Q}(d,n)\). By
Proposition~\ref{prop:reduction}, there exists a positive definite rational
form \(r\) of rank \(n-d\) such that
\[
aA_d\perp r\simeq n\cdot\langle 1\rangle.
\]
Then, using \eqref{eq:wittAd} after scaling by \(a\), and then
\eqref{eq:k-scaled-euclidean}, we have
\[
aA_{d'}\perp r
\simeq
aA_d\perp k\cdot\langle a\rangle\perp r
\simeq
aA_d\perp k\cdot\langle 1\rangle\perp r
\simeq
(n+k)\cdot\langle 1\rangle,
\]
so \(m\in S_{\Q}(d',n+k)\).

Conversely, if \(m=2a\in S_{\Q}(d',n+k)\), then for some positive definite
rational form \(r\) of rank \(n-d\) we have
\[
aA_{d'}\perp r\simeq (n+k)\cdot\langle 1\rangle.
\]
Using \eqref{eq:wittAd} after scaling by \(a\), and then
\eqref{eq:k-scaled-euclidean}, we have
\[
aA_{d'}
\simeq
aA_d\perp k\cdot\langle a\rangle
\simeq
aA_d\perp k\cdot\langle 1\rangle.
\]
Thus
\[
aA_d\perp k\cdot\langle 1\rangle\perp r
\simeq
aA_{d'}\perp r
\simeq
(n+k)\cdot\langle 1\rangle.
\]
By Witt cancellation,
\[
aA_d\perp r\simeq n\cdot\langle 1\rangle;
\]
hence, \(m\in S_{\Q}(d,n)\).
\end{proof}

\providecommand{\bysame}{\leavevmode\hbox to3em{\hrulefill}\thinspace}
\providecommand{\MR}{\relax\ifhmode\unskip\space\fi MR }
\providecommand{\MRhref}[2]{%
  \href{http://www.ams.org/mathscinet-getitem?mr=#1}{#2}
}
\providecommand{\href}[2]{#2}

\end{document}